\newcommand{\xmark}{\ding{55}}%
\declaretheorem[numberwithin=section]{theorem}
\newtheorem{lemma}[theorem]{Lemma}
\newtheorem{proposition}[theorem]{Proposition}
\newtheorem*{claim}{Claim}
\newtheorem*{theorem*}{Theorem}
\theoremstyle{definition}
\newtheorem{definition}[theorem]{Definition}
\theoremstyle{remark}
\newtheorem{remark}[theorem]{Remark}
\title{A classification of incompleteness statements}
\author{Henry Towsner}
\author{James Walsh}
\address{Department of Mathematics, University of Pennsylvania}
\email{htowsner@math.penn.edu}
\address{Department of Philosophy, New York University}
\email{jmw534@nyu.edu}
\thanks{Thanks to Hanul Jeon for discussion. Thanks to Mateusz Łełyk for discovering errors in an earlier draft. Thanks to an anonymous referee for helpful suggestions and, in particular, for suggesting the simple proof of Theorem \ref{main}, which we have included. The first author was partially supported by NSF Grant DMS-2054379.}
\begin{document}

\begin{abstract}
    For which choices of $X,Y,Z\in\{\Sigma^1_1,\Pi^1_1\}$ does no sufficiently strong $X$-sound and $Y$-definable extension theory prove its own $Z$-soundness? We give a complete answer, thereby delimiting the generalizations of G\"odel's second incompleteness theorem that hold within second-order arithmetic.
\end{abstract}

\maketitle

\section{Introduction}
G\"odel's second incompleteness theorem states that no sufficiently strong consistent and recursively axiomatized theory proves its own consistency. We give an equivalent restatement here:
\begin{theorem}[G\"odel]
    No sufficiently strong $\Pi^0_1$-sound and $\Sigma^0_1$-definable theory proves its own $\Pi^0_1$-soundness.
\end{theorem}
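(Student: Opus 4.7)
The plan is to reduce the claim to the standard form of G\"odel's second incompleteness theorem by arguing that $\Pi^0_1$-soundness is a strengthening of consistency that is visible both externally and inside the theory itself. Once we have the internal implication $T \vdash \mathrm{PiSound}(T) \to \mathrm{Con}(T)$, the conclusion drops out immediately from the classical statement of G\"odel II applied to $T$.

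First, I note the external implication: if $T$ is $\Pi^0_1$-sound, then $T$ is consistent, since $0=1$ is $\Delta^0_0 \subseteq \Pi^0_1$ and a $\Pi^0_1$-sound theory cannot prove a false $\Pi^0_1$ sentence. Thus the hypothesis of the present theorem already subsumes the consistency hypothesis needed for standard G\"odel II. Second, and more importantly, I would verify that this implication can be formalized: using the standard $\Pi^0_1$ partial truth predicate $\mathrm{True}_{\Pi^0_1}$, which provably satisfies $\mathrm{True}_{\Pi^0_1}(\ulcorner 0 = 1 \urcorner) \leftrightarrow 0 = 1$, instantiating the universally quantified statement $\mathrm{PiSound}(T) := \forall \varphi \in \Pi^0_1\,\bigl(\mathrm{Prov}_T(\varphi) \to \mathrm{True}_{\Pi^0_1}(\varphi)\bigr)$ at the code $\ulcorner 0=1\urcorner$ yields $\mathrm{Prov}_T(\ulcorner 0=1\urcorner) \to 0=1$, which is $\mathrm{Con}(T)$. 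Combining these: a $\Pi^0_1$-sound, $\Sigma^0_1$-definable $T$ is consistent, so classical G\"odel II gives $T \not\vdash \mathrm{Con}(T)$, and the internal implication then gives $T \not\vdash \mathrm{PiSound}(T)$.

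The main obstacle is simply checking the mild formalization in the second step: one needs ``sufficiently strong'' to encompass both a base fragment of arithmetic in which a $\Pi^0_1$-truth predicate exists satisfying the Tarskian biconditional for $\Pi^0_1$ sentences, and the standard Hilbert--Bernays--L\"ob derivability conditions needed to invoke G\"odel II for $\mathrm{Con}(T)$. Both are available in any reasonable base, for instance in extensions of $\mathrm{I}\Sigma_1$ or even elementary arithmetic, so no genuinely new technical difficulties arise beyond the classical case.
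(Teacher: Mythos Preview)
Your proposal is correct. The paper does not give its own proof of this statement; it presents it as an ``equivalent restatement'' of the classical G\"odel second incompleteness theorem and moves on. Your argument---instantiating the formalized $\Pi^0_1$-soundness schema at $\ulcorner 0=1\urcorner$ to obtain $T\vdash \mathsf{RFN}_{\Pi^0_1}(T)\to\mathsf{Con}(T)$, and then invoking classical G\"odel~II---is precisely the reduction implicit in that phrase, so there is nothing further to compare.
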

A theory is $\Pi^0_1$-sound (or, in general, $\Gamma$-sound) if all of its $\Pi^0_1$ theorems ($\Gamma$ theorems) are true. This notion can be formalized in the axiom systems we consider (see Definition \ref{reflection-dfn}).

A recent result \cite{walsh2021incompleteness} lifts G\"odel's theorem to the setting of second-order arithmetic, where stronger reflection principles are formalizable: 
\begin{theorem}[Walsh]\label{old-version}
No sufficiently strong $\Pi^1_1$-sound and $\Sigma^1_1$-definable theory proves its own $\Pi^1_1$-soundness.
\end{theorem}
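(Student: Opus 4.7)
The plan is to lift the Löb-style derivation of Gödel's second incompleteness theorem from the arithmetic to the analytic setting, using the observation that when $T$ is $\Sigma^1_1$-definable, the hypothesis that $T$ is $\Pi^1_1$-sound can itself be expressed as a single $\Pi^1_1$ sentence. Indeed, any sufficiently strong base theory (say $\mathrm{ACA}_0$) carries a universal $\Pi^1_1$ truth predicate $\mathrm{Tr}_{\Pi^1_1}$, so the reflection principle
\[
\mathrm{RFN}_{\Pi^1_1}(T) \;:=\; \forall \varphi \, \bigl(\mathrm{Prov}_T(\varphi) \to \mathrm{Tr}_{\Pi^1_1}(\varphi)\bigr)
\]
has a $\Sigma^1_1 \to \Pi^1_1$ matrix and is therefore itself $\Pi^1_1$ --- exactly analogous to the classical observation that $\mathrm{Con}(T)$ is $\Pi^0_1$. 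This is what makes a classical Löb-type manipulation feasible.

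Next, apply the diagonal lemma to produce a $\Pi^1_1$ sentence $G$ satisfying $G \leftrightarrow \neg\mathrm{Prov}_T(\ulcorner G \urcorner)$; since $\mathrm{Prov}_T$ is $\Sigma^1_1$, the syntactic form of $G$ is $\Pi^1_1$ and the fixed-point equivalence is provable in the base theory. Now assume for contradiction that $T \vdash \mathrm{RFN}_{\Pi^1_1}(T)$. Instantiating the universal quantifier at $\ulcorner G \urcorner$ and invoking the disquotation biconditional $\mathrm{Tr}_{\Pi^1_1}(\ulcorner G \urcorner) \leftrightarrow G$ (which $T$ proves because $G$ is literally a $\Pi^1_1$ sentence) yields $T \vdash \mathrm{Prov}_T(\ulcorner G \urcorner) \to G$. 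Combining this with the fixed-point equivalence forces $T \vdash \mathrm{Prov}_T(\ulcorner G \urcorner) \to \neg\mathrm{Prov}_T(\ulcorner G \urcorner)$, hence $T \vdash \neg\mathrm{Prov}_T(\ulcorner G \urcorner)$, and therefore $T \vdash G$.

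Finally, step outside of $T$: since $T \vdash G$, the sentence $\mathrm{Prov}_T(\ulcorner G \urcorner)$ is true in the meta-theory; but $G$ is a $\Pi^1_1$ theorem of $T$, so the hypothesis of $\Pi^1_1$-soundness forces $G$ itself to be true in the standard model, i.e.\ $\neg\mathrm{Prov}_T(\ulcorner G \urcorner)$ --- contradiction. The step that most deserves care is not the Löb manipulation itself but the setup: one must verify that the base theory uniformly proves the disquotation scheme $\mathrm{Tr}_{\Pi^1_1}(\ulcorner \psi \urcorner) \leftrightarrow \psi$ for $\Pi^1_1$ sentences $\psi$, in particular for the self-referential $G$, and that the fixed-point construction really places $G$ syntactically in $\Pi^1_1$, so that the single universally-quantified reflection principle delivers the crucial instance. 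With this in hand, ``sufficiently strong'' amounts to having the diagonal lemma together with a $\Pi^1_1$ truth predicate --- both available in very weak fragments of second-order arithmetic --- and the argument is essentially a verbatim rerun of Löb's.
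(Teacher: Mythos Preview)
Your proof is correct but takes a different route from the paper. The paper does not reprove Theorem~\ref{old-version} in detail: it cites Walsh's original diagonalization-free argument, which links $\Pi^1_1$-reflection to well-foundedness of proof-theoretic ordinals, and separately remarks that a short reduction to G\"odel's second incompleteness theorem---the dual of the short proof given for Theorem~\ref{main}, passing through a finite fragment $\Phi$ and the auxiliary theory $\Phi + (\Phi\in T)$---also works. Your argument is a third variant: a direct L\"ob-style manipulation using the diagonal lemma explicitly, rather than invoking G\"odel's second as a black box. Your approach is elementary and needs only something like $\mathsf{ACA}_0$ as a base (for the $\Pi^1_1$ truth predicate, instance-wise disquotation, and the fixed-point construction), which is in fact weaker than the $\Sigma^1_1\text{-}\mathsf{AC}_0$ the paper records for this theorem. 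What the paper's emphasized proof buys in exchange for the extra machinery is the avoidance of self-reference altogether and an explicit connection to ordinal analysis; your opening observation that $\mathsf{RFN}_{\Pi^1_1}(T)$ is itself $\Pi^1_1$ is correct and good motivation, though note that your argument never actually uses it---only the single instance at $\ulcorner G\urcorner$ is needed.
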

Note that this latter theorem applies to all $\Sigma^1_1$-definable theories and not just to the narrower class of $\Sigma^0_1$-definable theories.

There are three classes of formulas in the statement of Theorem \ref{old-version}, leading to eight variations one could consider, including the original. In this paper we consider the other seven. The following table records the truth-values of the statement: \emph{No sufficiently strong $X$-sound and $Y$-definable theory proves its own $Z$-soundness.}

\begin{table}[H]
    \centering
    \begin{tabular}{cccc}
         & X  & Y & Z   \\
        \checkmark & $\Pi^1_1$  & $\Sigma^1_1$ & $\Pi^1_1$   \\
        \xmark & $\Pi^1_1$  & $\Pi^1_1$ & $\Pi^1_1$   \\
        \xmark & $\Sigma^1_1$  & $\Pi^1_1$ & $\Pi^1_1$   \\
        \xmark & $\Pi^1_1$  & $\Sigma^1_1$ & $\Sigma^1_1$ \\
        \xmark & $\Sigma^1_1$  & $\Sigma^1_1$ & $\Sigma^1_1$   \\
        \xmark & $\Pi^1_1$  & $\Pi^1_1$ & $\Sigma^1_1$   \\

        \xmark & $\Sigma^1_1$  & $\Sigma^1_1$ & $\Pi^1_1$   \\
        \checkmark & $\Sigma^1_1$  & $\Pi^1_1$ & $\Sigma^1_1$   \\
    \end{tabular}
    \label{tab:my_label}
\end{table}
To place the \xmark s on the table we show how to give appropriately non-standard definitions of arbitrarily strong sound theories. Theorem \ref{old-version} places the first \checkmark \ on the table; for this a ``sufficiently strong'' theory is any extension of $\Sigma^1_1\text{-}\mathsf{AC}_0$. For the second \checkmark \ a ``sufficiently strong'' theory is any extension of $\mathsf{ATR}_0$.

Both $\checkmark$s can be placed on the table via relatively simple reductions to G\"odel's original second incompleteness theorem. However, in \cite{walsh2021incompleteness}, it was emphasized that the first \checkmark (i.e., Theorem \ref{old-version}) can be established by a self-reference-free (indeed, diagonalization-free) proof, which is desirable since applications of self-reference are a source of opacity. In particular, the first \checkmark can be established by attending to the connection between $\Pi^1_1$-reflection and central concepts of ordinal analysis. To place the second \checkmark \ on the table we forge a connection between provable $\Sigma^1_1$-soundness and a kind of ``pseduo-ordinal analysis.'' Whereas $\Pi^1_1$-soundness provably follows from the well-foundedness of a theory's proof-theoretic ordinal, we show that $\Sigma^1_1$-soundness provably follows from the statement that a certain canonical ill-founded linear order lacks \emph{hyperarithmetic} descending sequences. In this way, we provide a proof with neither self-reference nor diagonalization of yet another analogue of G\"odel's second incompleteness theorem.

\section{The Proofs}

\subsection{Simplest Cases}
We begin by placing the first four \xmark s on the table.

\begin{definition}\label{reflection-dfn}
    When $\Gamma$ is a set of formulas, we write $\mathsf{RFN}_\Gamma(U)$ for the sentence stating the $\Gamma$-soundness of $U$ (i.e. reflection for formulas from $\Gamma$):
   $$\mathsf{RFN}_\Gamma(U):= \forall \varphi \in \Gamma\big( \mathsf{Pr}_U(\varphi) \to \mathsf{True}_\Gamma(\varphi) \big).$$
Here $\mathsf{True}_\Gamma$ is a $\Gamma$-definable truth-predicate for $\Gamma$-formulas. For the complexity classes that we consider this truth-predicate is available already in the system $\mathsf{ACA}_0$.
\end{definition}

For $\Gamma\in\{\Sigma^1_1,\Pi^1_1\}$, we let $\widehat\Gamma$ be the dual complexity class. The following result is an immediate consequence of this definition:
\begin{proposition}
    Provably in $\mathsf{ACA}_0$, for $\Gamma\in\{\Sigma^1_1,\Pi^1_1\}$, $T$ is $\Gamma$-sound if and only if $T+\varphi$ is consistent for every true $\widehat\Gamma$ sentence $\varphi$.
\end{proposition}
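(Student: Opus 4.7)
The plan is to unfold both sides of the biconditional using the definitional equation for $\mathsf{RFN}_\Gamma$ and the basic proof-theoretic fact that $T+\psi$ is inconsistent iff $T\vdash \neg\psi$. The proposition is essentially a reformulation of the definition, so the substantive content is entirely in verifying that the syntactic manipulations go through in $\mathsf{ACA}_0$; no diagonal argument or appeal to the strength of $T$ is needed.

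For the forward direction I would argue by contraposition. Working in $\mathsf{ACA}_0$, suppose there is a true $\widehat\Gamma$-sentence $\psi$ such that $T+\psi$ is inconsistent. By the deduction theorem (formalizable in $\mathsf{ACA}_0$), $\mathsf{Pr}_T(\neg\psi)$ holds. Now $\neg\psi$ is (provably equivalent in $\mathsf{ACA}_0$ to) a $\Gamma$-sentence $\varphi$, and the truth-predicates satisfy the compositional clause $\mathsf{True}_\Gamma(\neg\psi)\leftrightarrow \neg\mathsf{True}_{\widehat\Gamma}(\psi)$; since $\psi$ is true, $\mathsf{True}_\Gamma(\neg\psi)$ fails. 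Hence $\mathsf{Pr}_T(\neg\psi)\wedge \neg\mathsf{True}_\Gamma(\neg\psi)$, contradicting $\mathsf{RFN}_\Gamma(T)$.

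Conversely, suppose $T$ is not $\Gamma$-sound: there is $\varphi\in\Gamma$ with $\mathsf{Pr}_T(\varphi)$ but $\neg\mathsf{True}_\Gamma(\varphi)$. Setting $\psi:=\neg\varphi$, we have $\psi$ a true $\widehat\Gamma$-sentence (again using the compositional clause and the fact that $\Gamma$ and $\widehat\Gamma$ are dual), while $T+\psi$ proves $\varphi\wedge\neg\varphi$, hence is inconsistent.

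The only place one has to be careful is the interchange between $\neg$ applied to a $\Gamma$-formula and membership in $\widehat\Gamma$: strictly, $\neg\varphi$ for $\varphi\in\Sigma^1_1$ is not literally in $\Pi^1_1$ but is logically equivalent to a $\Pi^1_1$-formula, and the truth predicate as defined is only applied to formulas in the canonical normal form. This is handled by the standard $\mathsf{ACA}_0$-provable fact that the $\Gamma$-truth predicate respects this equivalence, so I expect this step to be the only bookkeeping obstacle; the argument otherwise reduces to a one-line unfolding of Definition~\ref{reflection-dfn}.
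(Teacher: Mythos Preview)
Your proposal is correct and is exactly the unfolding the paper has in mind: the paper does not give a proof at all, stating only that the proposition ``is an immediate consequence of this definition,'' and your argument is precisely that immediate consequence spelled out via the deduction theorem and the duality $\neg\Gamma\leftrightarrow\widehat\Gamma$. Your care about the bookkeeping between $\neg\varphi$ and the canonical $\widehat\Gamma$ normal form is appropriate but, as you note, routine in $\mathsf{ACA}_0$.
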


\begin{theorem}\label{general}
Let $\Gamma\in\{\Sigma^1_1,\Pi^1_1\}$. For any sound and arithmetically definable theory $S$, there is a sound and $\Gamma$-definable extension $T$ of $S$ such that $T\vdash \mathsf{RFN}_{\Gamma}(T)$.
\end{theorem}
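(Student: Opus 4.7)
The natural construction is to let $T$ extend $S$ by adjoining the scheme of all true $\Gamma$-sentences. Using the $\Gamma$-definable truth predicate $\mathsf{True}_\Gamma$ that the paper has recorded as available in $\mathsf{ACA}_0$, I would define
\[
\mathsf{Ax}_T(\psi) \;\equiv\; \mathsf{Ax}_S(\psi) \vee \mathsf{True}_\Gamma(\psi).
\]
Since $\mathsf{Ax}_S$ is arithmetical and $\mathsf{True}_\Gamma\in\Gamma$, the right-hand side is in $\Gamma$, so $T$ is $\Gamma$-definable. Every axiom of $T$ is true in the intended model, so $T$ is sound.

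The crux is $T\vdash\mathsf{RFN}_\Gamma(T)$. The first step is to observe that
\[
\forall\psi\bigl(\mathsf{Ax}_S(\psi)\to\mathsf{True}_\Gamma(\psi)\bigr)
\]
is itself a true $\Gamma$-sentence: the implication ``arithmetic $\to\Gamma$'' collapses to $\Gamma$, and both $\Sigma^1_1$ and $\Pi^1_1$ are closed under universal number quantification (using countable choice in the $\Sigma^1_1$ case, which holds in the intended model). It is true by the soundness of $S$, hence it lies among the axioms of $T$. Combined with the defining equivalence of $\mathsf{Ax}_T$, this yields $\forall\psi(\mathsf{Ax}_T(\psi)\to\mathsf{True}_\Gamma(\psi))$ provably in $T$: internally, $T$ recognises each of its own axioms as $\Gamma$-true.

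The second step is to combine this with the formalised soundness of first-order logic relative to $\mathsf{True}_\Gamma$, provable in $\mathsf{ACA}_0$: from $\Gamma$-true axioms every $\Gamma$-theorem is $\Gamma$-true. Together with the uniform Tarski biconditionals $\mathsf{True}_\Gamma(\varphi)\leftrightarrow\varphi$ for $\varphi\in\Gamma$, this delivers $\mathsf{RFN}_\Gamma(T)$.

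The main obstacle I anticipate is the last step: a $T$-proof of a $\Gamma$-conclusion may pass through intermediate formulas of complexity higher than $\Gamma$ (e.g.\ a $\Pi^1_2$ cut in a proof whose endformula is $\Pi^1_1$), which $\mathsf{True}_\Gamma$ does not evaluate directly. The standard remedy is a cut-elimination or Tait-style argument restricting subformulas to the $\Gamma$-fragment once the endformula is fixed in $\Gamma$. If this proves awkward uniformly in $\Gamma$, my fallback is a Feferman-style construction: present the axioms via a $\Gamma$-formula that conditionally admits $\mathsf{RFN}_\Gamma(T)$ based on a bounded $\Gamma$-definable consistency check, and verify soundness and provable $\Gamma$-reflection by direct analysis of the resulting non-standard provability predicate, along the lines of the classical Feferman numeration in the $\Pi^0_1$ case.
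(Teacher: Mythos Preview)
Your primary construction runs into trouble at two points, only one of which you flag.

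The unflagged issue is in step~1: the sentence $\forall\psi\bigl(\mathsf{Ax}_S(\psi)\to\mathsf{True}_\Gamma(\psi)\bigr)$ need not be true. ``Arithmetically definable'' constrains the \emph{set} of axioms, not the syntactic shape of each axiom; $S$ may perfectly well contain, say, a $\Pi^1_2$ comprehension instance, and for such $\psi$ the partial predicate $\mathsf{True}_\Gamma(\psi)$ simply fails. Soundness of $S$ does not help here. You could try replacing $S$ by its set of $\Gamma$-theorems, but then the resulting $T$ need no longer extend $S$.

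The flagged issue in step~3 is genuine, and the cut-elimination remedy does not dispose of it. To carry any real content (in particular, to prove what $S$ proves), proofs in $T$ must pass through either non-$\Gamma$ axioms of $S$ or second-order comprehension/quantifier instances; in a cut-free second-order proof the ``subformulas'' include substitution instances $\psi[A/X]$ of arbitrary complexity, so the proof does not stay inside the $\Gamma$-fragment in the way your partial truth predicate would require. There is no $\mathsf{ACA}_0$-provable principle of the form ``from $\Gamma$-true axioms every $\Gamma$-consequence is $\Gamma$-true'' once the axiom set is strong enough to be interesting.

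Your Feferman-style fallback is exactly what the paper does, and it is short. Put $U:=S+\Sigma^1_1\text{-}\mathsf{AC}_0$ and present $T$ by
\[
\tau(\varphi)\ :=\ U(\varphi)\wedge\mathsf{RFN}_\Gamma(U).
\]
Since $U$ is arithmetic, $\mathsf{RFN}_\Gamma(U)$ is a $\Gamma$-sentence (using $\Sigma^1_1\text{-}\mathsf{AC}_0$ to pull the existential set quantifier through the number quantifier when $\Gamma=\Sigma^1_1$), so $\tau$ is $\Gamma$. Provably in $\Sigma^1_1\text{-}\mathsf{AC}_0$ one has the dichotomy $\tau=\emptyset\ \vee\ (\tau=U\wedge\mathsf{RFN}_\Gamma(U))$, and in either case $\mathsf{RFN}_\Gamma(\tau)$ follows. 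Externally $\mathsf{RFN}_\Gamma(U)$ holds, so the theory $T$ defined by $\tau$ equals $U$, is sound, and extends $S$; since $U\supseteq\Sigma^1_1\text{-}\mathsf{AC}_0$, we get $T\vdash\mathsf{RFN}_\Gamma(\tau)$. No partial-truth soundness lemma and no cut elimination are needed.
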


\begin{proof}
   We define $U:=S+\Sigma^1_1\text{-}\mathsf{AC}_0$. Then we define:
   $$T(\varphi):= U(\varphi) \wedge \mathsf{RFN}_{\Gamma}(U)$$
   That is, $\varphi\in T$ if and only if both $\varphi\in U$ and $\mathsf{RFN}_{\Gamma}(U)$. 

   Then $\Sigma^1_1\text{-}\mathsf{AC}_0\vdash T=\emptyset \vee \big(T= U \wedge \mathsf{RFN}_{\Gamma}(U)\big)$. Thus, reasoning by cases, $\Sigma^1_1\text{-}\mathsf{AC}_0\vdash \mathsf{RFN}_{\Gamma}(T)$. Since $T= U \supseteq \Sigma^1_1\text{-}\mathsf{AC}_0$, $T\vdash \mathsf{RFN}_{\Gamma}(T)$.
   
   To see that $T$ is $\Gamma$-definable, note that $U$ is $\Gamma$-definable and that $\mathsf{RFN}_\Gamma(U)$
   has an arithmetic antecedent and a $\Gamma$ consequent.
   
   Finally, note that $T$ is just $U$, whence it is sound.
\end{proof}

\begin{remark}
    In the proof of Theorem \ref{general}, we use the $\Sigma^1_1$ choice principle only if $\Gamma=\Sigma^1_1$. Indeed, to infer that $\mathsf{RFN}_{\Sigma^1_1}(U)$ is $\Sigma^1_1$, we must pull the positively occurring existential set quantifier from $\mathsf{True}_\Gamma(\varphi)$ in front of a universal number quantifier. If $\Gamma=\Pi^1_1$, it suffices to define $U$ as $S+\mathsf{ACA}_0$, since $\mathsf{RFN}_{\Pi^1_1}$ has a finite axiomatization in $\mathsf{ACA}_0$.
\end{remark}

\subsection{Intermediate Cases}

We can resolve two more cases with a subtler version of the proof of Theorem \ref{general}. First, we recall the following useful lemma.

\begin{lemma}\label{reflection}
For $T$ extending $\mathsf{ACA}_0$, $\mathsf{RFN}_{\widehat\Gamma}(T)$ does not follow from any consistent extension of $T$ by $\Gamma$ formulas.
\end{lemma}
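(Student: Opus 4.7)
The plan is to reduce to a single-formula extension via compactness and then apply Löb's theorem inside $T$.

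First, I would observe that both $\Sigma^1_1$ and $\Pi^1_1$ are closed under finite conjunctions provably in $\mathsf{ACA}_0$. Combined with compactness, this reduces the lemma to the single-sentence case: for any $\Gamma$ sentence $\varphi$ with $T+\varphi$ consistent, show that $T+\varphi\not\vdash\mathsf{RFN}_{\widehat\Gamma}(T)$.

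Second, I would argue by contradiction. Assume $T+\varphi\vdash\mathsf{RFN}_{\widehat\Gamma}(T)$. The negation $\neg\varphi$ is (equivalent in $\mathsf{ACA}_0$ to) a $\widehat\Gamma$ sentence, and $\mathsf{True}_{\widehat\Gamma}(\neg\varphi)\leftrightarrow\neg\varphi$ holds in $\mathsf{ACA}_0$. So instantiating the $\widehat\Gamma$-reflection principle at $\neg\varphi$ gives $T+\varphi\vdash\mathsf{Pr}_T(\neg\varphi)\to\neg\varphi$. Discharging the hypothesis $\varphi$ and using that $\varphi\to\neg\varphi$ is equivalent to $\neg\varphi$, I obtain the crucial fact $T\vdash\mathsf{Pr}_T(\neg\varphi)\to\neg\varphi$.

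Third, I would apply Löb's theorem to $T$ with the sentence $\neg\varphi$ to conclude $T\vdash\neg\varphi$, contradicting the assumed consistency of $T+\varphi$.

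The main subtle step is the application of Löb's theorem, which requires $\mathsf{Pr}_T$ to satisfy the Hilbert--Bernays--Löb derivability conditions; this is standard for the recursively axiomatized base theories $T$ that this lemma is intended to be applied to in the ``Intermediate Cases'' subsection. The rest of the proof is just bookkeeping: pushing the negation to confirm that $\neg\varphi$ lies in $\widehat\Gamma$, invoking the truth-predicate equivalence in $\mathsf{ACA}_0$, and performing the propositional simplification $\varphi\to\neg\varphi\equiv\neg\varphi$.
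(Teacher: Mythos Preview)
Your proof is correct and follows essentially the same line as the paper's: reduce to a single $\Gamma$ sentence, instantiate $\widehat\Gamma$-reflection at its negation, and finish with a G\"odelian incompleteness argument. The only cosmetic difference is the last step: you discharge $\varphi$ to obtain $T\vdash\mathsf{Pr}_T(\neg\varphi)\to\neg\varphi$ and invoke L\"ob's theorem for $T$, whereas the paper stays inside $T+\gamma$, derives $T+\gamma\vdash\neg\mathsf{Pr}_T(\neg\gamma)=\mathsf{Con}(T+\gamma)$, and invokes G\"odel's second for $T+\gamma$; these are interchangeable maneuvers.
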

\begin{proof}
    Suppose $T+\gamma \vdash \mathsf{RFN}_{\widehat\Gamma}(T)$ with $\gamma\in\Gamma$. Then $T+\gamma \vdash \mathsf{Pr}_T(\neg \gamma)\to \neg \gamma$. Hence $T+\gamma \vdash \neg \mathsf{Pr}_T(\neg \gamma)$, i.e., $T+\gamma \vdash \mathsf{Con}(T+\gamma)$. So $T+\gamma\vdash \bot$.
\end{proof}

The following theorem adds two more \xmark s to our table.

\begin{theorem}\label{dual-positive-theorem}
Let $\Gamma\in\{\Sigma^1_1,\Pi^1_1\}$. For any sound and arithmetically definable theory $U$, there is a $\widehat\Gamma$-sound and $\widehat\Gamma$-definable extension of $U$ that proves its own $\Gamma$-soundness.
\end{theorem}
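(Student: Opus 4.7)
The natural approach is to mimic the proof of Theorem \ref{general}, but with a conjunct of complexity $\widehat\Gamma$ in place of one of complexity $\Gamma$. Concretely, I plan to let $U_0 := U + \Sigma^1_1\text{-}\mathsf{AC}_0$ and define $T(\varphi) := U_0(\varphi) \wedge \psi$ for an appropriately chosen true sentence $\psi \in \widehat\Gamma$. Since $\psi \in \widehat\Gamma$ and $U_0$ is arithmetic, $T$ is $\widehat\Gamma$-definable by construction; and since $\psi$ is externally true, $T = U_0$ in reality, so $T$ extends $U$ and inherits full soundness (in particular $\widehat\Gamma$-soundness) from $U_0$.

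To obtain $T \vdash \mathsf{RFN}_\Gamma(T)$, I intend to repeat the case-analysis in the proof of Theorem \ref{general}: the base theory proves $T = \emptyset \vee (T = U_0 \wedge \psi)$, which gives $\mathsf{RFN}_\Gamma(T)$ trivially in the empty case and reduces to $\mathsf{RFN}_\Gamma(U_0) = \mathsf{RFN}_\Gamma(T)$ in the other. For this closure to go through, $\psi$ must provably imply $\mathsf{RFN}_\Gamma(U_0)$ over $\Sigma^1_1\text{-}\mathsf{AC}_0$. Lemma \ref{reflection} clarifies why the indirection through a conjunct is needed rather than the naive $T := U_0 + \psi$: by the lemma, any consistent $\widehat\Gamma$-extension of $U_0$ by axioms fails to prove $\mathsf{RFN}_\Gamma(U_0)$, whereas our $T(\varphi) := U_0(\varphi)\wedge\psi$ is not literally obtained by adjoining $\psi$ as an axiom, and so the lemma is circumvented.

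The main obstacle is the explicit choice of $\psi$: a true $\widehat\Gamma$-sentence that, provably over a standard arithmetic base, implies the $\Gamma$-reflection of the arithmetic theory $U_0$. I expect this to come from the complexity-matched manifestations of reflection principles, for example a $\Pi^1_1$ well-foundedness statement for a tailored ordinal notation when $\widehat\Gamma = \Pi^1_1$, or a $\Sigma^1_1$ statement asserting the existence of an appropriate model witnessing $\Pi^1_1$-correctness of $U_0$ when $\widehat\Gamma = \Sigma^1_1$. Once such $\psi$ is identified, the verification that $T$ is $\widehat\Gamma$-definable, $\widehat\Gamma$-sound, and proves $\mathsf{RFN}_\Gamma(T)$ follows the template of Theorem \ref{general} essentially verbatim.
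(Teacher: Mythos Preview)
Your plan has a genuine gap, and it is precisely the one you thought you were sidestepping. For the case analysis to yield $T\vdash\mathsf{RFN}_\Gamma(T)$ you need $U_0\vdash\psi\to\mathsf{RFN}_\Gamma(U_0)$: externally $T=U_0$, so the argument must be carried out inside $U_0$, and in the branch where $\psi$ holds and $T=U_0$ you must derive $\mathsf{RFN}_\Gamma(U_0)$ from $\psi$. But Lemma~\ref{reflection}, read with the roles of $\Gamma$ and $\widehat\Gamma$ interchanged, says exactly that no consistent extension of $U_0$ by a $\widehat\Gamma$ sentence proves $\mathsf{RFN}_\Gamma(U_0)$. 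Since any true $\psi\in\widehat\Gamma$ gives such a consistent extension, the required $\psi$ simply does not exist, and neither of your suggested candidates (a well-foundedness statement, or existence of a suitable model) can work. The ``indirection through a conjunct'' changes nothing: in Theorem~\ref{general} it succeeded only because the conjunct $\mathsf{RFN}_\Gamma(U)$ was itself of complexity $\Gamma$, so that instance of Lemma~\ref{reflection} did not apply; once you force the conjunct into $\widehat\Gamma$, the lemma bites.

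The paper's construction escapes this by giving up on $T$ being externally sound. It sets $U':=U+\mathsf{RFN}_\Gamma(U)$ and defines
\[
\tau(x):=U(x)\ \vee\ \Big(\mathsf{RFN}_{\widehat\Gamma}(U')\ \wedge\ x\in\{\ulcorner\mathsf{RFN}_\Gamma(U)\urcorner,\ \ulcorner\neg\mathsf{RFN}_{\widehat\Gamma}(U')\urcorner\}\Big),
\]
so that externally $\tau$ axiomatizes the \emph{unsound} theory $U'+\neg\mathsf{RFN}_{\widehat\Gamma}(U')$. Lemma~\ref{reflection} is then used \emph{positively}, to check that this theory is nevertheless $\widehat\Gamma$-sound. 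The key mechanism is a deliberate self-deception rather than a case split over a true side-condition: $T$ contains the false $\Gamma$ axiom $\neg\mathsf{RFN}_{\widehat\Gamma}(U')$, so $T$ believes that $\tau$ defines merely $U$; together with the axiom $\mathsf{RFN}_\Gamma(U)$ this yields $T\vdash\mathsf{RFN}_\Gamma(\tau)$. That $T$ must be mistaken about the identity of $\tau$ is not an artifact of this particular construction---Proposition~\ref{limitation} shows it is unavoidable.
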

\begin{proof}
Consider the following formulas:
\begin{flalign*}
    \varphi(x) &:=   x=\ulcorner\mathsf{RFN}_{\Gamma}(U)\urcorner \vee x =\ulcorner \neg \mathsf{RFN}_{\widehat\Gamma}(U + \mathsf{RFN}_{\Gamma}(U)) \urcorner\\
    \tau(x) & := U(x) \vee \Big( \mathsf{RFN}_{\widehat\Gamma}\big(U + \mathsf{RFN}_{\Gamma}(U) \big) \wedge \varphi(x) \Big)
\end{flalign*}

Let $T$ be the theory defined by $\tau$.

\begin{claim}
    $T$ is $\widehat\Gamma$-definable via $\tau$.
\end{claim}
By inspection.

\begin{claim}
    $T$ is $\widehat\Gamma$-sound.
\end{claim}
Since $U$ is sound, $U+\mathsf{RFN}_\Gamma(U)$ is sound, so $\mathsf{RFN}_{\widehat\Gamma}(U+\mathsf{RFN}_\Gamma(U))$ holds, and therefore externally, we see that $T$ is the theory:
$$  U + \mathsf{RFN}_{\Gamma}(U) + \neg \mathsf{RFN}_{\widehat\Gamma}(U + \mathsf{RFN}_{\Gamma}(U)).$$
In particular, $T$ has the form $U'+\neg \mathsf{RFN}_{\widehat\Gamma}(U')$ where $U'$ is sound.
Suppose that $U'+\neg \mathsf{RFN}_{\widehat\Gamma}(U')\vdash \sigma$ where $\sigma$ is false $\widehat\Gamma$. Then $U'+ \neg \sigma \vdash \mathsf{RFN}_{\widehat\Gamma}(U')$. So $\mathsf{RFN}_{\widehat\Gamma}(U')$ follows from a consistent extension of $U'$ by $\Gamma$ formulas, contradicting Lemma \ref{reflection}.

\begin{claim}
    $T\vdash \mathsf{RFN}_{\Gamma}(\tau)$.
\end{claim}
From our external characterization of $T$ we see that 
$$T\vdash \neg \mathsf{RFN}_{\widehat\Gamma}(U + \mathsf{RFN}_{\Gamma}(U)).$$
Hence $T$ proves that $\tau$ defines the theory $U$. Again, appealing to our external characterization of $T$, $T\vdash \mathsf{RFN}_{\Gamma}(U)$. Thus, $T\vdash \mathsf{RFN}_{\Gamma}(\tau)$.
\end{proof}

\subsection{Limitations}

The presentation $\tau$ of theory $T$ defined in Theorem \ref{dual-positive-theorem} is clearly somewhat pathological, in part because $T$ cannot discern the identity of $\tau$. Before continuing to the final case, we want to illustrate that such pathologies are inevitable. We use a proof technique suggested at the end of \cite{walsh2021incompleteness}.

\begin{proposition}\label{limitation}
Let $T$ be a $\Gamma$-definable extension of $\Sigma^1_2\text{-}\mathsf{AC}_0$ that proves Theorem \ref{old-version} and Theorem \ref{main}. Suppose that there is a $\Gamma$ presentation $\tau$ of $T$ such that $T$ proves $\mathsf{RFN}_{\widehat\Gamma}(\tau)$. Then both of the following hold:
\begin{enumerate}
    \item There is a theorem $A$ of $T$ such that $T\vdash \neg \tau(A)$.
    \item There is a $\Gamma$ presentation $\tau^\star$ of $T$ such that $T$ proves $\neg \mathsf{RFN}_{\widehat\Gamma}(\tau^\star)$.
\end{enumerate}
\end{proposition}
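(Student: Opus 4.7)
The plan is to handle~(1) by internalizing the metatheorem that $T$ proves, and to handle~(2) by constructing $\tau^\star$ via the fixed-point lemma so that the same metatheorem applies contrapositively.

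For~(1), let $A := \mathsf{RFN}_{\widehat\Gamma}(\tau)$. By hypothesis $T \vdash A$, and $T$ proves the appropriate metatheorem (Theorem~\ref{old-version} when $\Gamma = \Sigma^1_1$, Theorem~\ref{main} when $\Gamma = \Pi^1_1$). Internalizing this metatheorem with witness $\tau$---which $T$ provably recognizes as $\Gamma$-definable, sufficiently strong (extending $\Sigma^1_2\text{-}\mathsf{AC}_0$), and $\widehat\Gamma$-sound via $A$---yields $T \vdash \neg\mathsf{Pr}_\tau(A)$. Combining with the elementary $\mathsf{ACA}_0$-provable implication $\tau(A) \to \mathsf{Pr}_\tau(A)$ (axioms are trivially provable) gives $T \vdash \neg\tau(A)$, so $A$ is the required theorem.

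For~(2), the plan is to mirror the structure of~(1): produce a $\Gamma$ presentation $\tau^\star$ of $T$ that $T$ provably sees as proving its own $\widehat\Gamma$-soundness, so the contrapositive of the internalized metatheorem yields $T \vdash \neg \mathsf{RFN}_{\widehat\Gamma}(\tau^\star)$. Concretely, I would use the fixed-point lemma to obtain a $\Gamma$-formula $\tau^\star$ satisfying
\[
\tau^\star(x) \iff \tau(x) \vee \bigl(x = \ulcorner\mathsf{RFN}_{\widehat\Gamma}(\tau^\star)\urcorner \wedge \mathsf{Pr}_\tau(\mathsf{RFN}_{\widehat\Gamma}(\tau^\star))\bigr).
\]
The condition $\mathsf{Pr}_\tau(\cdot)$ is $\Gamma$, so $\tau^\star$ remains $\Gamma$-definable. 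Externally, the extra axiom fires only when $\tau$ already proves it, so $\tau^\star$ and $\tau$ define the same theory, namely $T$; a brief case analysis rules out the degenerate case using $T \vdash \mathsf{RFN}_{\widehat\Gamma}(\tau)$. Internally, I would verify via an analogous case analysis inside $T$ that $T \vdash \mathsf{Pr}_{\tau^\star}(\mathsf{RFN}_{\widehat\Gamma}(\tau^\star))$; then, since $T$ also proves that $\tau^\star$ is $\Gamma$-definable and sufficiently strong, applying the contrapositive of the internalized metatheorem to $\tau^\star$ produces $T \vdash \neg \mathsf{RFN}_{\widehat\Gamma}(\tau^\star)$.

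The main obstacle lies in the careful bookkeeping of self-reference in~(2). Part~(1) shows that $T$'s view of $\tau$'s axioms is unfaithful: $T$ fails to recognize $\mathsf{RFN}_{\widehat\Gamma}(\tau)$ itself as a $\tau$-theorem. The delicate step is thus showing that, despite this failure, $T$ can still trace through the self-referential clause of $\tau^\star$ to derive $\mathsf{Pr}_{\tau^\star}(\mathsf{RFN}_{\widehat\Gamma}(\tau^\star))$ without covertly assuming $T$ sees $\tau$ and $\tau^\star$ as coextensive axiom sets. Checking the complexity calculation so that $\tau^\star \in \Gamma$ and handling the interplay between $\mathsf{Pr}_\tau$ and $\mathsf{Pr}_{\tau^\star}$ under the fixed-point equation are the principal technical points.
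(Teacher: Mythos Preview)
Your argument for (1) has a gap: you assert that $T$ ``provably recognizes'' $\tau$ as extending $\Sigma^1_2\text{-}\mathsf{AC}_0$, but nothing in the hypotheses guarantees this. Externally $T$ extends $\Sigma^1_2\text{-}\mathsf{AC}_0$, so there is some conjunction $B$ of $\tau$-axioms with $B\vdash\sigma$; but each $\tau(B_i)$ is a $\Gamma$ sentence and $T$ need not prove it---indeed, the very point of the proposition is that $T$ may disown its own axioms. Without $T\vdash\mathsf{Pr}_\tau(\sigma)$ you cannot discharge the ``sufficiently strong'' hypothesis of the internalized metatheorem. The paper builds this into $A$: take $A$ to be the conjunction $A_1\wedge\dots\wedge A_n$ of the finitely many $\tau$-axioms actually used in a $T$-proof of $\sigma\wedge\mathsf{RFN}_{\widehat\Gamma}(\tau)$. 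Then the \emph{hypothesis} $\tau(A)$, reasoned inside $T$, immediately yields both $\mathsf{Pr}_\tau(\sigma)$ and $\mathsf{Pr}_\tau(\mathsf{RFN}_{\widehat\Gamma}(\tau))$, so the metatheorem applies and gives $\neg\mathsf{RFN}_{\widehat\Gamma}(\tau)$; contraposing against $T\vdash\mathsf{RFN}_{\widehat\Gamma}(\tau)$ yields $T\vdash\neg\tau(A)$.

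Your approach to (2) is more complicated than necessary and, as you yourself flag, leaves a genuine obstacle. The case analysis does not close: in the branch where $\neg\mathsf{Pr}_\tau(\mathsf{RFN}_{\widehat\Gamma}(\tau^\star))$ holds inside $T$, the self-referential clause is inert, so provably $\tau^\star$ and $\tau$ coincide as axiom sets, and then $\mathsf{Pr}_{\tau^\star}(\mathsf{RFN}_{\widehat\Gamma}(\tau^\star))$ would require exactly the $\mathsf{Pr}_\tau(\mathsf{RFN}_{\widehat\Gamma}(\tau^\star))$ you are assuming fails. The paper avoids all of this: once you have the sentence $A$ from (1), simply set $\tau^\star(x):=\tau(x)\vee x=\ulcorner A\urcorner$. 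This is $\Gamma$ by inspection, it externally presents $T$ (since $A$ is a $T$-theorem), and now $\tau^\star(A)$ holds outright inside $T$, so the very same argument as in (1)---with $\tau^\star$ in place of $\tau$ and no hypothesis left to discharge---gives $T\vdash\neg\mathsf{RFN}_{\widehat\Gamma}(\tau^\star)$. No fixed point is needed.
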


\begin{proof}
Suppose that each of the following holds:
\begin{enumerate}
    \item $T$ is definable by a $\Gamma$ formula $\tau$;
    \item $T$ extends $\Sigma^1_2\text{-}\mathsf{AC}_0$;
    \item $T$ proves Theorem \ref{old-version} and Theorem \ref{main};
    \item $T$ proves the $\widehat\Gamma$-soundness of $\tau$.
\end{enumerate}
Let $\sigma$ be a sentence axiomatizing $\Sigma^1_2\text{-}\mathsf{AC}_0$. We have assumed $T\vdash \sigma$. We also have that $T\vdash \mathsf{RFN}_{\widehat\Gamma}(\tau)$. Let $A_1,\dots,A_n$ be the axioms of $T$ that are used in the $T$-proof of $\sigma\wedge \mathsf{RFN}_{\widehat\Gamma}(\tau)$. Thus:
$$\vdash (A_1\wedge\dots\wedge A_n) \to \big(\sigma\wedge \mathsf{RFN}_{\widehat\Gamma}(\tau)\big).$$

\begin{claim}
    $T\vdash \tau(A_1\wedge\dots\wedge A_n) \to \neg \mathsf{RFN}_{\widehat\Gamma}(\tau).$
\end{claim}
Reason in $T$. Suppose $\tau(A_1\wedge\dots\wedge A_n)$. Then $\tau$ extends $\Sigma^1_2\text{-}\mathsf{AC}_0$ and $\tau$ proves $\mathsf{RFN}_{\widehat\Gamma}(\tau)$. Since $\tau$ is a $\Gamma$ formula, Theorem \ref{old-version} (if $\Gamma=\Sigma^1_1$) or Theorem \ref{main} (if $\Gamma=\Pi^1_1$) entails that $\tau$ is not $\widehat\Gamma$-sound.

Since $T\vdash \mathsf{RFN}_{\widehat\Gamma}(\tau)$, the claim implies that $T\vdash \neg \tau(A_1\wedge\dots\wedge A_n)$.

On the other hand, consider $\tau^\star(x):=\tau(x) \vee x=\ulcorner A_1\wedge\dots\wedge A_n\urcorner$. Note that $\tau^\star$ is a $\Gamma$ definition of $T$. Yet we have just shown that $T\vdash \neg \mathsf{RFN}_{\widehat\Gamma}(\tau^\star)$.
\end{proof}

\begin{remark}
    Note that in the proof we need only assume that $T$ extends $\Sigma^1_2\text{-}\mathsf{AC}_0$ if $\Gamma=\Pi^1_1$. If $\Gamma=\Sigma^1_1$, it suffices to assume that $T$ extends $\Sigma^1_1\text{-}\mathsf{AC}_0$ since Theorem \ref{old-version} applies to extensions of $\Sigma^1_1\text{-}\mathsf{AC}_0$. Likewise, we need not assume that $T$ proves \emph{both} Theorem \ref{old-version} and Theorem \ref{main}. It suffices to assume that $T$ proves Theorem \ref{old-version} (if $\Gamma=\Sigma^1_1$) or that $T$ proves Theorem \ref{main} (if $\Gamma=\Pi^1_1$).
\end{remark}

\subsection{Hardest Case}

The only remaining case is the dual form of Theorem \ref{old-version}:
\begin{theorem}\label{main}
No $\Sigma^1_1$-sound and $\Pi^1_1$-definable extension of $\mathsf{ATR}_0$ proves its own $\Sigma^1_1$-soundness.
\end{theorem}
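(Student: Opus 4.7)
The plan is to follow the pseudo-ordinal strategy announced in the introduction, dual to the ordinal-analytic proof of Theorem \ref{old-version}. To each $\Pi^1_1$-presentation $\tau$ of a theory $T \supseteq \mathsf{ATR}_0$ I would canonically associate a linear order $L_\tau$, definable from $\tau$ in $\mathsf{ATR}_0$, such that provably in $\mathsf{ATR}_0$ the absence of hyperarithmetic descending sequences in $L_\tau$ entails $\mathsf{RFN}_{\Sigma^1_1}(\tau)$. The contradiction with $T \vdash \mathsf{RFN}_{\Sigma^1_1}(\tau)$ would then arise by producing an actual hyperarithmetic descending sequence in $L_\tau$ which $T$ is nonetheless forced to rule out.

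Concretely, $L_\tau$ should be built via a Kleene--Brouwer-style construction on a tree of partial counterexamples to the candidate $\Sigma^1_1$-theorems of $\tau$, so that its hyperarithmetically witnessed descending sequences track failures of $\Sigma^1_1$-soundness. Because $\tau$ is $\Pi^1_1$, the enumeration of candidate $\Sigma^1_1$-theorems must be mediated by the $\Pi^1_1$-definition throughout; the base theory $\mathsf{ATR}_0$ is needed because it supplies arithmetic transfinite recursion and suffices to develop the hyperarithmetic hierarchy required to formalize ``no hyperarithmetic descending sequence''. Under the standing hypothesis $T \vdash \mathsf{RFN}_{\Sigma^1_1}(\tau)$, the key implication relativized inside $T$ forces $T$ to prove the corresponding ``pseudo-soundness'' statement about $L_\tau$. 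One then exhibits, externally, a hyperarithmetic descending sequence in $L_\tau$ whose existence is $\Sigma^1_1$-expressible; this yields a $T$-provable false $\Sigma^1_1$-sentence, contradicting $\Sigma^1_1$-soundness.

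The main obstacle is the design of $L_\tau$ and the verification of the central implication in $\mathsf{ATR}_0$. Unlike the $\Sigma^1_1$-definable setting of Theorem \ref{old-version}, here the provability predicate of $T$ is $\Sigma^1_2$ (since ``$\varphi \in \tau$'' is $\Pi^1_1$), so the link between $T$-provable $\Sigma^1_1$-statements and hyperarithmetic witnesses is indirect, and the construction of $L_\tau$ must smoothly absorb this extra quantifier. A secondary difficulty is calibrating the complexity of ``$L_\tau$ has a hyperarithmetic descending sequence'' so that the extracted witness genuinely falsifies a $T$-provable $\Sigma^1_1$-sentence, rather than merely contradicting some stronger soundness principle $T$ need not obey.
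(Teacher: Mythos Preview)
Your high-level picture matches the paper's alternate proof---link $\Sigma^1_1$-soundness to pseudo-well-foundedness of a canonical order---but the argument as sketched has a genuine gap.

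First, a minor point: the implication you write runs the wrong way for the inference you then draw. To pass from $T\vdash\mathsf{RFN}_{\Sigma^1_1}(\tau)$ to $T\vdash\mathsf{PWF}(L_\tau)$ you need $\mathsf{RFN}_{\Sigma^1_1}(\tau)\to\mathsf{PWF}(L_\tau)$ provably in $T$, not the converse you state. More seriously, your plan to finish by \emph{exhibiting an external hyperarithmetic descending sequence in $L_\tau$} cannot work. Once the correct implication is provable in $\mathsf{ATR}_0$ (hence true), the standing hypothesis that $T$ is $\Sigma^1_1$-sound makes $\mathsf{RFN}_{\Sigma^1_1}(\tau)$ true, so $\mathsf{PWF}(L_\tau)$ holds externally and there is no such sequence to exhibit. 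You cannot simultaneously arrange the key implication and have $L_\tau$ fail to be pseudo-well-founded in the real world; any ``Kleene--Brouwer tree of counterexamples to $\Sigma^1_1$-theorems'' will simply be well-founded when $T$ is $\Sigma^1_1$-sound.

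The paper's proof never claims its order $\prec_T$ has a hyp descending sequence; instead it shows $T\nvdash\mathsf{PWF}(\prec_T)$ by a dual of Rathjen's $\Sigma^1_1$-bounding (Lemma~\ref{Rathjen}), resting on Harrison's theorem that the recursive pseudo-well-orders are $\Sigma^1_1$-complete. The order $\prec_T$ is not a Kleene--Brouwer order on counterexamples but the sum, compared via hyp embeddings, of all recursive $\prec_e$ with $T\vdash\mathsf{PWF}(\prec_e)$. Assuming $T\vdash\mathsf{PWF}(\prec_T)$, the predicate ``$\prec_x$ hyp-embeds into $\prec_T$'' is $\Pi^1_1$ (here the $\Pi^1_1$-definability of $T$ is a feature, not an obstacle: provability in $T$ is $\Pi^1_1$ over $\mathsf{ATR}_0$, not $\Sigma^1_2$ as you feared) and $T$-provably implies $\mathsf{PWF}(\prec_x)$; the bounding lemma then manufactures an $e$ with $\mathsf{ATR}_0\vdash\mathsf{PWF}(\prec_e)$ yet with no hyp embedding of $\prec_e$ into $\prec_T$, contradicting the obvious embedding $\alpha\mapsto(e,\alpha)$. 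This fixed-point step via $\Sigma^1_1$-completeness of $\mathsf{PWF}$ is the idea your proposal is missing.
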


First we give a short proof that was discovered by an anonymous referee:
\begin{proof}
    Let $T$ be a $\Sigma^1_1$-sound and $\Pi^1_1$-definable extension of $\mathsf{ATR}_0$ that proves its own $\Sigma^1_1$-soundness. Let $\Phi$ be the (conjunction of) the finitely many statements used in the proof (assume that a single sentence axiomatizing $\mathsf{ATR}_0$ is among them). The sentence $\Phi\in T$ is true $\Pi^1_1$. Hence, $\Phi+\Phi\in T$ is consistent and $\Phi+\Phi\in T\vdash \mathsf{RFN}_{\Sigma^1_1}(T)$. By running this same argument inside $\Phi+\Phi\in T$, we conclude that $\Phi+\Phi\in T\vdash \mathsf{Con}(\Phi+\Phi\in T)$. Yet $\Phi + \Phi\in T$ is a consistent and finitely axiomatized extension of $\mathsf{ATR}_0$, which contradicts G\"odel's second incompleteness theorem.
\end{proof}
Note that a dual version of this proof also establishes Theorem \ref{old-version}.

For the rest of this section we will give an alternate proof. In \cite{walsh2021incompleteness}, Theorem \ref{old-version} was proved using concepts from ordinal analysis. In short, a connection is forged between $\Pi^1_1$-soundness and well-foundedness of proof-theoretic ordinals. Since we are now interested in $\Sigma^1_1$-soundness, we forge an analogous connection between $\Sigma^1_1$-soundness and \emph{pseudo-well-foundedness}, where an order is pseudo-well-founded if it lacks hyperarithmetic descending sequences.

For the rest of this section assume that $T$ is a $\Sigma^1_1$-sound and $\Pi^1_1$-definable extension of $\mathsf{ATR}_0$. In what follows, $\mathsf{PWF}(x)$ is a predicate stating that $x$ encodes a recursive pseudo-well-founded order (that is, a linear order with no hyperarithmetic decreasing sequence). A universal quantifier over \textsc{Hyp} can be transformed into an existential set quantifier in the theory $\mathsf{ATR}_0$. It follows that the statement $\mathsf{PWF}(x)$ is $T$-provably equivalent to a $\Sigma^1_1$ formula.


We will define $\prec_T$ to hold on pairs $(e,\alpha)$ where $e\in\textsc{Rec}$ and $\alpha\in dom(\prec_e)$.  We define $(e,\alpha)\prec_T(e',\beta)$ to hold if
\begin{quote}
there is some $f\in\textsc{Hyp}$ so that $\mathsf{Emb}(f,\prec_e\upharpoonright \alpha+1,\prec_{e'}\upharpoonright \beta)$ and $T \vdash \mathsf{PWF}(\prec_{e'})$.
\end{quote}
Here we write $\prec_e\upharpoonright \alpha+1$ for the restriction of the relation $\prec_e$ to $\{\gamma\in dom(\prec_e)\mid \gamma\preceq_e \alpha\}$.

To prove that $T\nvdash \mathsf{RFN}_{\Sigma^1_1}(T)$ it suffices to check that $T\vdash \mathsf{RFN}_{\Sigma^1_1}(T)\to \mathsf{PWF}(\prec_T)$ and that $T\nvdash \mathsf{PWF}(\prec_T)$. Let's take these one at a time.

\begin{claim}
    $T\vdash \mathsf{RFN}_{\Sigma^1_1}(T)\to \mathsf{PWF}(\prec_T)$.
\end{claim}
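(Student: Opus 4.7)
The plan is to reason inside $T$, assume $\mathsf{RFN}_{\Sigma^1_1}(T)$, and suppose for contradiction that $\prec_T$ is not pseudo-well-founded; fix a hyperarithmetic descending sequence $((e_n,\alpha_n))_{n<\omega}$ in $\prec_T$. Unpacking the definition of $\prec_T$ at each successive step yields two pieces of data for every $n$: (a) $T\vdash \mathsf{PWF}(\prec_{e_n})$, and (b) there is a hyperarithmetic order-embedding $f_n$ of $\prec_{e_{n+1}}\upharpoonright\alpha_{n+1}+1$ into $\prec_{e_n}\upharpoonright\alpha_n$.

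Applying $\mathsf{RFN}_{\Sigma^1_1}(T)$ to (a), and using that $\mathsf{PWF}$ is $T$-provably $\Sigma^1_1$, gives that every $\prec_{e_n}$ is genuinely pseudo-well-founded; in particular $\prec_{e_0}$ has no hyperarithmetic descending sequence. From (b) I would form $\gamma_n := f_0 \circ f_1 \circ \cdots \circ f_{n-1}(\alpha_n)$. A short induction using the order-embedding property --- starting from $f_{n-1}(\alpha_n)\prec_{e_{n-1}}\alpha_{n-1}$ and pushing forward along the earlier embeddings --- shows $\gamma_{n+1}\prec_{e_0}\gamma_n$, so $(\gamma_n)_{n<\omega}$ is a descending sequence in $\prec_{e_0}$. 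Provided this sequence is itself hyperarithmetic, it contradicts $\mathsf{PWF}(\prec_{e_0})$, completing the argument.

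The main obstacle I expect is verifying that $(\gamma_n)_{n<\omega}$ is hyperarithmetic, which reduces to picking the embeddings $(f_n)_{n<\omega}$ uniformly hyperarithmetically in the parameter coding $((e_n,\alpha_n))_{n<\omega}$. Each individual $f_n$ is supplied by a $\Sigma^1_1$ condition built into $\prec_T$, and $\mathsf{ATR}_0$ does not prove full $\Sigma^1_1$-choice. I plan to route this through Kleene's $\mathcal{O}$: the statement ``$f$ is hyperarithmetic and embeds $\prec_{e_{n+1}}\upharpoonright\alpha_{n+1}+1$ into $\prec_{e_n}\upharpoonright\alpha_n$'' can be phrased as an arithmetic predicate on $\mathcal{O}$-indices, so taking the least appropriate index at each $n$ yields a sequence of indices hyperarithmetic in the given hyp parameter; decoding then gives $(f_n)_{n<\omega}$ as a hyperarithmetic family and hence $(\gamma_n)_{n<\omega}$ as hyperarithmetic by the uniform hyperarithmetic closure under composition available in $\mathsf{ATR}_0$. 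The remainder is bookkeeping.
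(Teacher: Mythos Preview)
Your overall strategy matches the paper's exactly: assume a hyperarithmetic descending sequence in $\prec_T$, compose the associated hyp embeddings to produce a hyp descending sequence in $\prec_{e_0}$, and combine this with $T\vdash\mathsf{PWF}(\prec_{e_0})$ to contradict $\mathsf{RFN}_{\Sigma^1_1}(T)$. The only substantive difference is how you handle the uniformization of the embeddings, and here your sketch has both a factual slip and a technical gap.

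First, the slip: you write that $\mathsf{ATR}_0$ does not prove full $\Sigma^1_1$-choice, but it does (Simpson, Theorem V.8.3); the paper explicitly invokes this. The genuine issue is not the existence of a choice sequence but that the resulting sequence of embeddings must itself be \emph{hyperarithmetic}. The paper resolves this in two steps: it uses the Kleene--Souslin theorem to see that the relevant matrix is $\Sigma^1_1$ (since the hyp parameter $f$ is $\Delta^1_1$), and then invokes the fact, provable in $\mathsf{ATR}_0$, that $\textsc{Hyp}$ satisfies $\Sigma^1_1$-choice as an inner model, yielding a single hyp $g$ coding all the $g_n$.

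Second, the gap: your proposed workaround via ``least $\mathcal{O}$-index'' does not work as stated. Membership in $\mathcal{O}$ is $\Pi^1_1$-complete, so the predicate ``$a\in\mathcal{O}$ and $H_a$ codes an embedding of the required type'' is not arithmetic in $a$; searching for the least such $a$ is therefore not a hyperarithmetic operation, and you cannot conclude that $n\mapsto a_n$ (hence $n\mapsto f_n$, hence $n\mapsto\gamma_n$) is hyp this way. What you are reaching for is precisely the theorem the paper cites---that $\mathsf{ATR}_0$ proves $\textsc{Hyp}\models\Sigma^1_1\text{-}\mathsf{AC}_0$---whose proof is nontrivial and not simply a least-index search. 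Replace your $\mathcal{O}$-index argument with that citation and the proof goes through as in the paper.
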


\begin{proof}
    Reason in $T$. Suppose $\neg \mathsf{PWF}(\prec_T)$. That is, there is a hyp descending sequence $f$ in $\prec_T$. Let $f(n)=(e_n,\beta_n)$. Thus we have:
    $$\forall n \; (e_{n+1},\beta_{n+1})\prec_T (e_n,\beta_n)$$
    By the definition of $\prec_T$, this is just to say:
    $$\forall n \; \exists g \in \textsc{Hyp} \; \mathsf{Emb}(g,f(n+1),f(n)).$$
    where we abuse notation to write $\mathsf{Emb}(g,f(n+1),f(n))$ for $\mathsf{Emb}(g,\prec_{e_{n+1}}\upharpoonright \beta_{n+1}+1,\prec_{e_{n}}\upharpoonright \beta_{n})$ to emphasize the role of $f$ in the statement.

    By the Kleene--Souslin Theorem \cite[Theorem VIII.3.19]{simpson2009subsystems} in $\mathsf{ACA}_0$, $f$ is $\Delta^1_1$ since $f$ is in $\textsc{Hyp}$, so the formula $\mathsf{Emb}(g,f(n+1),f(n))$ is equivalent to a $\Sigma^1_1$ formula (this is an application of $\Sigma^1_1\text{-}\mathsf{AC}_0$, which is a consequence of $\mathsf{ATR}_0$ \cite[Theorem V.8.3]{simpson2009subsystems}).

    $\mathsf{ATR}_0$ proves that $\textsc{Hyp}$ satisfies $\Sigma^1_1$ choice, and therefore proves
    $$\exists g\in\textsc{Hyp}  \; \forall n \; \mathsf{Emb}(g_n,f(n+1),f(n)).$$
    Note that $g$ is technically a set encoding the graphs of the countably many functions $g_n$ in the usual way.

    Using arithmetic comprehension, we form the composition $g_\star$ of the functions encoded in $g$---$g_\star(0)=g_0(\beta_1)$, $g_\star(1)=g_0(g_1(\beta_2))$ and so on. The function $g_\star$ is a hyp descending sequence in $\prec_{e_0}$, so $\prec_{e_0}$ is not pseudo-well-founded. Since $f(1)\prec_T f(0)$, we also have $T\vdash\mathsf{PWF}(\prec_{e_0})$. Recall that $\mathsf{PWF}(\prec_{e_0})$ is a $\Sigma^1_1$ claim. Hence, $\neg \mathsf{RFN}_{\Sigma^1_1}(T)$.
\end{proof}

Before addressing the second claim, let's record a dual form of Rathjen's formalized version of $\Sigma^1_1$ bounding \cite[Lemma 1.1]{rathjen1991role}.

\begin{lemma}\label{Rathjen}
    Suppose $H(x)$ is a $\Pi^1_1$ formula such that 
    $$\mathsf{ATR}_0 \vdash \forall x \big(H(x)\to \mathsf{PWF}(x)\big).$$ Then for some $e\in\textsc{Rec}$, $\mathsf{ATR}_0\vdash \mathsf{PWF}(e) \wedge \neg H(e)$.
\end{lemma}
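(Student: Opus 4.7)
The plan is to dualize Rathjen's formalized $\Sigma^1_1$-bounding lemma \cite[Lemma 1.1]{rathjen1991role}. Rathjen's original lemma says: if $\psi$ is $\Sigma^1_1$ and $\mathsf{ATR}_0 \vdash \forall x(\psi(x) \to \mathsf{WF}(x))$, then there is a recursive well-order $e$ with $\mathsf{ATR}_0 \vdash \forall x(\psi(x) \to x$ embeds into $e)$. Its proof uses the ordinal analysis of $\mathsf{ATR}_0$ (whose $\Pi^1_1$-ordinal is $\Gamma_0$) to extract a uniform bound from the provability of the hypothesis. The statement of Lemma \ref{Rathjen} reverses every ingredient: $\Sigma^1_1 \leftrightarrow \Pi^1_1$, $\mathsf{WF} \leftrightarrow \mathsf{PWF}$, recursive well-order $\leftrightarrow$ recursive pseudo-well-founded order, and an upper-bounding witness $\leftrightarrow$ an avoiding witness.

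First I would contrapose the hypothesis to $\mathsf{ATR}_0 \vdash \forall x(\neg \mathsf{PWF}(x) \to \neg H(x))$. Since $\neg \mathsf{PWF}$ is provably $\Pi^1_1$ in $\mathsf{ATR}_0$ (the dual of the $\Sigma^1_1$-characterization of $\mathsf{PWF}$ noted in the text) and $\neg H$ is $\Sigma^1_1$, this implication is provably $\Sigma^1_1$, so its universal closure is a $\Pi^1_2$ theorem of $\mathsf{ATR}_0$. Writing $H(x) \equiv \mathsf{WF}(T_x)$ in Kleene normal form, the contrapositive says that whenever $x$ has a hyperarithmetic descending sequence, $T_x$ has an infinite branch; and by $\Sigma^1_1$-choice in $\mathsf{ATR}_0$ this reduction is uniformly witnessed by a hyperarithmetic procedure converting the descending sequence into the branch. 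This uniform procedure is the effective content we harvest from the $\mathsf{ATR}_0$-proof of the hypothesis.

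Next I would construct the witness $e$ by a Harrison-style diagonalization via the recursion theorem. Define $e$ to index the Kleene--Brouwer ordering of a recursive tree $S$, where $S$ is chosen so that $S$ has no hyperarithmetic branch (making $\prec_e$ provably pseudo-well-founded by the standard $\mathsf{ATR}_0$-argument for Harrison orders) while the structure of $S$ diagonalizes against $T_e$ so as to force an infinite branch through $T_e$. The main obstacle will be threading the diagonalization so that \emph{both} conjuncts are $\mathsf{ATR}_0$-provable, not merely true in the standard model. Provability of $\mathsf{PWF}(e)$ follows from the Harrison-style analysis internal to $\mathsf{ATR}_0$. Provability of $\neg H(e)$ is the delicate part: one must exploit the effective content of the hypothesis extracted in the previous step to produce an $\mathsf{ATR}_0$-provable witness for the $\Sigma^1_1$-statement that $T_e$ is ill-founded. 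The recursion-theoretic interplay between $S$ and $T_e$ must be set up so that the very procedure witnessing $\mathsf{ATR}_0 \vdash \forall x(H(x) \to \mathsf{PWF}(x))$ transforms the Harrison structure of $S$ into the required branch of $T_e$.
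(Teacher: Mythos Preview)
Your plan has a genuine gap at its core. You propose to extract from the hypothesis a uniform hyperarithmetic procedure that, given a hyp descending sequence in $x$, produces an infinite branch through $T_x$; you then want to apply this procedure to a Harrison-style order $e$. But a Harrison order is by construction pseudo-well-founded: it has \emph{no} hyperarithmetic descending sequence to feed into your procedure. Your final paragraph acknowledges this is ``the delicate part'' and speaks of the procedure ``transforming the Harrison structure of $S$'' into a branch of $T_e$, but this is precisely where the argument is missing an idea rather than a detail. Moreover, the extraction step itself is dubious: the contrapositive is an implication $\Pi^1_1 \to \Sigma^1_1$, and $\Sigma^1_1$-choice does not give you a uniform hyperarithmetic transformation from witnesses of the antecedent to witnesses of the consequent.

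The paper's proof avoids all of this by never trying to unwind the hypothesis. It uses Harrison's result that $\mathsf{PWF}$ is $\Sigma^1_1$-complete to obtain a total recursive $\{k\}$ with $\mathsf{ATR}_0 \vdash \neg H(n) \leftrightarrow \mathsf{PWF}(\{k\}(n))$, and then takes a recursion-theoretic fixed point $e$ with $\{e\} \simeq \{\{k\}(e)\}$, so that $\mathsf{ATR}_0$ proves $\mathsf{PWF}(e) \leftrightarrow \mathsf{PWF}(\{k\}(e)) \leftrightarrow \neg H(e)$. Combining this equivalence with the assumed $\mathsf{ATR}_0 \vdash H(e) \to \mathsf{PWF}(e)$ immediately yields $\mathsf{ATR}_0 \vdash \mathsf{PWF}(e)$ and $\mathsf{ATR}_0 \vdash \neg H(e)$. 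The hypothesis is used only as a black-box implication at the single instance $e$; no witness extraction, no ordinal analysis, and no separate diagonalization against $T_e$ is needed.
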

\begin{proof}
    \cite[Theorem 1.3]{harrison1968recursive} implies that $\mathsf{PWF}$ (the set of pseduo-well-founded recursive linear orders) is $\Sigma^1_1$-complete (note that Harrison does not use self-reference or any other form of diagonalization in his proof). Hence, there is a total recursive function $\{k\}$ such that:
    $$\neg H(n) \Longleftrightarrow \mathsf{PWF}\big(\{{k}\}(n)\big).$$

    The reduction can be carried out in $\mathsf{ATR}_0$, so
    $$\mathsf{ATR}_0 \vdash \neg H(x) \leftrightarrow \mathsf{PWF}\big(\{{k}\}(x)\big).$$
    By the recursion theorem and the S-m-n theorem, there is an integer $e$ so that $\mathsf{ATR}_0$ proves that $\forall i[\{e\}(i)\simeq \{\{k\}(e)\}(i)]$ (where $\simeq$ means that if either side converges then both sides converge and are equal). Working in $\mathsf{ATR}_0$, $\neg \mathsf{PWF}(e)$ implies $\neg\mathsf{PWF}(\{k\}(e))$, which implies $H(e)$, which implies $\mathsf{PWF}(e)$, which is a contradiction. So $\mathsf{ATR}_0\vdash \mathsf{PWF}(e)$. (Not that this implies $e\in\textsc{Rec}$ by the definition of $\mathsf{PWF}(e)$.)

    Similarly, $H(e)$ implies $\neg \mathsf{PWF}(\{k\}(e))$, which is equivalent to $\neg\mathsf{PWF}(e)$, which we have already ruled out. So $\mathsf{ATR}_0\vdash \neg H(e)$.
\end{proof}

\begin{claim}
    $T\nvdash \mathsf{PWF}(\prec_T)$.
\end{claim}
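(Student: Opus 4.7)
The plan is to derive a contradiction from the assumption $T\vdash\mathsf{PWF}(\prec_T)$ by applying Lemma \ref{Rathjen}. The overall picture mirrors the $\Sigma^1_1$-bounding strategy used for the $\Pi^1_1$ case in \cite{walsh2021incompleteness}: if $T$ could prove $\mathsf{PWF}(\prec_T)$, then every recursive order that $T$ provably deems pseudo-well-founded would embed into $\prec_T$ and hence actually be pseudo-well-founded, and Rathjen-style bounding would then produce a provably-$\mathsf{PWF}$ index $e$ that $T$ simultaneously fails to recognize, contradicting $\Sigma^1_1$-soundness.

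First I would verify that $\mathsf{Pr}_T$ is $\Pi^1_1$. Let $\tau$ be the $\Pi^1_1$ formula defining $T$. Writing $\tau(A)=\forall X\,\psi(A,X)$ for arithmetic $\psi$, the predicate ``$p$ is a $T$-proof of $\varphi$'' bundles the finitely many axiom-checks together under a single set quantifier, so $\mathsf{Prf}_T(p,\varphi)$ is $\Pi^1_1$ and $\mathsf{Pr}_T(\varphi)$ has the form $\exists n\,\forall X\,\theta(n,X,\varphi)$ with $\theta$ arithmetic. Applying $\Sigma^1_1$-choice (available in $\mathsf{ATR}_0$) converts this into $\forall Y\,\exists n\,\theta(n,Y_n,\varphi)$, which is $\Pi^1_1$. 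Thus $H(x):=\mathsf{Pr}_T(\ulcorner\mathsf{PWF}(\prec_x)\urcorner)$ is a $\Pi^1_1$ formula.

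Next I would show $T\vdash H(x)\to\mathsf{PWF}(\prec_x)$. Reason inside $T$, where $\mathsf{PWF}(\prec_T)$ holds by assumption. Given $H(x)$, whenever $\alpha\prec_x\beta$ the identity map witnesses $\mathsf{Emb}(\mathrm{id},\prec_x\upharpoonright\alpha+1,\prec_x\upharpoonright\beta)$, and the remaining conjunct in the definition of $\prec_T$ is precisely $H(x)$; hence $\alpha\mapsto(x,\alpha)$ is a recursive embedding of $\prec_x$ into $\prec_T$. Composing any hyp descending sequence in $\prec_x$ with this embedding would yield a hyp descending sequence in $\prec_T$, contradicting $\mathsf{PWF}(\prec_T)$. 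Therefore $\mathsf{PWF}(\prec_x)$.

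Finally I would invoke Lemma \ref{Rathjen} with $T$ in place of $\mathsf{ATR}_0$---the proof transfers directly because $T$ extends $\mathsf{ATR}_0$ and the only step using the hypothesis $H\to\mathsf{PWF}$ is the derivation of a contradiction from $\neg\mathsf{PWF}(e)$---to obtain $e\in\textsc{Rec}$ with $T\vdash\mathsf{PWF}(\prec_e)\wedge\neg H(e)$. Externally, the existence of such a $T$-proof of $\mathsf{PWF}(\prec_e)$ is exactly the true statement $H(e)$. But $\neg H(e)$ is $\Sigma^1_1$ and $T$-provable, so the $\Sigma^1_1$-soundness of $T$ forces $\neg H(e)$ to be true as well---contradiction. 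The main obstacle, in my view, is the complexity bookkeeping: one has to confirm that $\mathsf{Pr}_T$ is genuinely $\Pi^1_1$ when $T$ is merely $\Pi^1_1$-definable, and that the generalization of Lemma \ref{Rathjen} to the base theory $T$ is valid; the conceptual step---the identity-embedding argument in the subclaim---is then quite short.
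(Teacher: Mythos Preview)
Your argument is correct and follows the same overall architecture as the paper: assume $T\vdash\mathsf{PWF}(\prec_T)$, exhibit a $\Pi^1_1$ formula $H$ with $T\vdash H(x)\to\mathsf{PWF}(\prec_x)$, apply the Rathjen-style bounding lemma to obtain $e$ with $T\vdash\mathsf{PWF}(\prec_e)\wedge\neg H(e)$, and contradict $\Sigma^1_1$-soundness. The difference is the choice of $H$. The paper takes $H(x)$ to be the embeddability statement $\exists f\in\textsc{Hyp}\;\mathsf{Emb}(f,\prec_x,\prec_T)$; there the implication $H(x)\to\mathsf{PWF}(\prec_x)$ is immediate from $\mathsf{PWF}(\prec_T)$, and the identity map $\alpha\mapsto(e,\alpha)$ is produced only at the end to witness that $T\vdash\mathsf{PWF}(\prec_e)$ forces $H(e)$ to be true. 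You instead take $H(x)=\mathsf{Pr}_T(\ulcorner\mathsf{PWF}(\prec_x)\urcorner)$ directly; the identity embedding shows up earlier, inside the proof of $H(x)\to\mathsf{PWF}(\prec_x)$, and the final contradiction is then immediate because the conclusion $T\vdash\mathsf{PWF}(\prec_e)$ \emph{is} $H(e)$. Your route is marginally more direct, and you are careful to apply Lemma~\ref{Rathjen} over $T$ rather than $\mathsf{ATR}_0$ (a point the paper leaves implicit, since the hypothesis $H(x)\to\mathsf{PWF}(\prec_x)$ is only available in $T$); the paper's choice keeps the order-theoretic flavor of the argument more visible. The ingredients are identical in both.
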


\begin{proof}
Suppose that $T$ proves $\mathsf{PWF}(\prec_T)$. From the definition of $\prec_T$, it follows that:
$$T\vdash \big(\exists f\in \textsc{Hyp} \; \mathsf{Emb}(f,\prec_x,\prec_T)\big) \to \mathsf{PWF}(\prec_x).$$
The formula $\exists f\in \textsc{Hyp} \; \mathsf{Emb}(f,\prec_x,\prec_T)$ consists of an existential hyp quantifier before a $\Pi^1_1$ matrix (the matrix is $\Pi^1_1$ since $\prec_T$ refers to provability in $T$ and $T$ is $\Pi^1_1$-definable). Hence, there exists a $\Pi^1_1$ formula $\pi(x)$ such that:
$$\mathsf{ATR}_0\vdash \pi(x) \leftrightarrow \exists f\in \textsc{Hyp} \; \mathsf{Emb}(f,\prec_x,\prec_T).$$
By Lemma \ref{Rathjen}, there is some $e$ so that 
$$\mathsf{ATR}_0 \vdash \mathsf{PWF}(\prec_e) \wedge \neg\pi(e).$$
Hence $\mathsf{ATR}_0\vdash \neg \exists f\in \textsc{Hyp} \; \mathsf{Emb}(f,\prec_e,\prec_T)$. Moreover, since $\mathsf{ATR}_0$ is sound, we infer that $\neg \exists f\in \textsc{Hyp} \; \mathsf{Emb}(f,\prec_e,\prec_T)$ is true.

On the other hand, since $T$ extends $\mathsf{ATR}_0$, we infer that $T\vdash\mathsf{PWF}(\prec_e)$. Hence the map $\alpha\mapsto (e,\alpha)$ is a canonical hyp embedding of $\prec_e$ into $\prec_T$. So $\neg \exists f\in \textsc{Hyp} \; \mathsf{Emb}(f,\prec_e,\prec_T)$ is false after all. Contradiction.
\end{proof}

It follows from the claims that $T\nvdash \mathsf{RFN}_{\Sigma^1_1}(T)$, which completes the proof of Theorem \ref{main}.
\bibliographystyle{plain}
\bibliography{bibliography}

\end{document}